\documentclass[12pt]{article}
\usepackage{amsmath,amssymb,amsthm}
\usepackage{url}
\usepackage{braket}
\topmargin= -20pt \oddsidemargin= -12pt \evensidemargin= -12pt
\textwidth = 6in \textheight = 9.3in
\begin{document}

\def\RR{\boldsymbol{R}}
\def\s{\sigma}
\def\Epsilon{{\cal E}}
\def\BB {\boldsymbol{B}}
\def\CC {\boldsymbol{C}}
\def\XX{\mathbb{X}}
\def\FF{\boldsymbol{F}}
\def\SS{\boldsymbol{S}}
\def\qq{\boldsymbol{q}}
\def\alp{\alpha}
\def\bet{\beta}
\def\tet{\theta}

\def\Tet{\Theta}
\def\gam{\gamma}
\def\d{\Delta}
\def\eps{\epsilon}
\def\ome{\omega}
\def\nek{,\ldots,}
\def\ox{\overline x}
\def\bfX{{\bf X}}
\def\ve{\vec}
\def\rr{\boldsymbol{r}}
\def\sss{\boldsymbol{s}}
\def\vsig{{\vec\sigma}}
\def\na{\nabla}
\def\sq{\sqrt}
\def\BB {\boldsymbol{B}}
\def\KK {\boldsymbol{K}}
\def\LL {\boldsymbol{L}}
\def\AAA {\boldsymbol{A}}
\def\XX{\mathbb{X}}
\def\alp{\alpha}
\def\bet{\beta}
\def\tet{\theta}
\def\Tet{\Theta}
\def\d{\Delta}
\def\eps{\epsilon}
\def\ome{\omega}
\def\nek{,\ldots,}
\def\ox{\overline x}
\def\bfX{{\bf X}}
\def\ve{\vec}
\def\vsig{{\vec\sigma}}
\def\na{\nabla}
\def\sq{\sqrt}
\def\xx{\boldsymbol{x}}
\def\zz{\boldsymbol{z}}
\def\dd{\boldsymbol{d}}
\def\ee{\boldsymbol{e}}
\def\ff{\boldsymbol{f}}
\def\yy{\boldsymbol{y}}
\def\bb{\boldsymbol{b}}
\def\ee{\boldsymbol{e}}
\def\uu{\boldsymbol{u}}
\def\vv{\boldsymbol{v}}
\def\ww{\boldsymbol{w}}
\def\cc{\boldsymbol{c}}
\def\sss{\boldsymbol{s}}
\def\tt{\boldsymbol{t}}
\def\UU{\boldsymbol{U}}
\def\DD{\boldsymbol{D}}
\def\EE{\boldsymbol{E}}
\def\VV{\boldsymbol{V}}
\def\GG{\boldsymbol{G}}
\def\HH{\boldsymbol{H}}
\def\WW{\boldsymbol{W}}
\def\PP{\boldsymbol{P}}
\def\pp{\boldsymbol{p}}
\def\QQ{\boldsymbol{Q}}
\def\RR{\boldsymbol{R}}
\def\TT{\boldsymbol{T}}
\def\XX{\boldsymbol{X}}
\def\YY{\boldsymbol{Y}}
\def\ZZ{\boldsymbol{Z}}
\def\MM{\boldsymbol{M}}
\def\NN{\boldsymbol{N}}
\def\aaa{\boldsymbol{a}}
\def\gg{\boldsymbol{g}}
\def\hh{\boldsymbol{h}}
\def\II{\boldsymbol{I}}
\def\JJ{\boldsymbol{J}}
\def\OO{\boldsymbol{O}}
\def\00{\boldsymbol{0}}
\def\11{\boldsymbol{1}}
\def\ggamma{\mbox{\boldmath{$\gamma$}}}
\def\ddelta{\mbox{\boldmath{$\delta$}}}
\def\eepsilon{\mbox{\boldmath{$\epsilon$}}}
\def\rrho{\mbox{\boldmath{$\rho$}}}
\def\xxi{\mbox{\boldmath{$\xi$}}}
\def\eeta{\mbox{\boldmath{$\eta$}}}
\def\LLambda{\mbox{\boldmath{$\Lambda$}}}
\def\SSigma{\mbox{\boldmath{$\Sigma$}}}
\def\pphi{\mbox{\boldmath{$\phi$}}}
\def\ppsi{\mbox{\boldmath{$\psi$}}}
\def\PPsi{\mbox{\boldmath{$\Psi$}}}
\newcommand{\off}[1]{}
\def\M{\scriptscriptstyle{\boldsymbol{M}}}

\renewcommand{\theequation}{\thechapter.\arabic{equation}}
\newcommand{\mrm}{\mathrm}
\newcommand{\beq}{\begin{equation}}
\newcommand{\eeq}{\end{equation}}
\newcommand{\bthm}{\begin{theorem}}
\newcommand{\ethm}{\end{theorem}}
\newcommand{\ben}{\begin{enumerate}}
\newcommand{\een}{\end{enumerate}}
\newcommand {\non}{\nonumber}
\newcommand{\C}{\mbox{$\mathbb{C}$}}
\newcommand{\rank}{\mbox{rank}}

\newtheorem{theorem}{Theorem}
\newtheorem{lemma}[theorem]{Lemma}
\newtheorem{corollary}[theorem]{Corollary}
\newtheorem{definition}[theorem]{Definition}
\newtheorem{example}[theorem]{Example}
\newtheorem*{definition*}{Definition}
\pagenumbering{roman}
\renewcommand{\thelemma}{\thesection.\arabic{lemma}}
\renewcommand{\thetable}{\thesection.\arabic{table}}
\renewcommand{\thetable}{\arabic{table}}
\renewcommand{\thedefinition}{\thesection.\arabic{definition}}
\renewcommand{\theexample}{\thesection.\arabic{example}}
\renewcommand{\theequation}{\thesection.\arabic{equation}}
\newcommand{\mysection}[1]{\section{#1}\setcounter{equation}{0}
\setcounter{theorem}{0} \setcounter{lemma}{0}
\setcounter{definition}{0}}

\title
{\bf On a Vectorized Version of a  Generalized Richardson  Extrapolation Process}

\author
{Avram Sidi\\
Computer Science Department\\
Technion - Israel Institute of Technology\\ Haifa 32000, Israel\\~ \\
E-mail:\ \ \url{asidi@cs.technion.ac.il}\\
URL:\ \ \url{http://www.cs.technion.ac.il/~asidi}}
\date{Appeared in: \ {\em Numerical Algorithms,} 74:937--949, 2017}
\bigskip\bigskip
\maketitle
\thispagestyle{empty}
\newpage
\begin{abstract}\noindent
Let   $\{\xx_m\}$ be a vector sequence that satisfies
$$ \xx_m\sim \sss+\sum^\infty_{i=1}\alpha_i \gg_i(m)\quad\text{as $m\to\infty$},$$
$\sss$ being the limit or antilimit of $\{\xx_m\}$ and
$\{\gg_i(m)\}^\infty_{i=1}$ being an asymptotic scale as $m\to\infty$, in the sense that $$\lim_{m\to\infty}\frac{\|\gg_{i+1}(m)\|}{\|\gg_{i}(m)\|}=0,\quad i=1,2,\ldots.$$ The vector sequences $\{\gg_i(m)\}^\infty_{m=0}$, $i=1,2,\ldots,$ are known, as well as $\{\xx_m\}$.
In this work, we analyze the convergence and convergence acceleration properties of a vectorized version of the generalized Richardson extrapolation process that is defined via the equations
$$ \sum^k_{i=1}\braket{\yy,\Delta\gg_{i}(m)}\widetilde{\alpha}_i=\braket{\yy,\Delta\xx_m},\quad n\leq m\leq n+k-1;\quad \sss_{n,k}=\xx_n+\sum^k_{i=1}\widetilde{\alpha}_i\gg_{i}(n),$$ $\sss_{n,k}$ being the approximation to $\sss$. Here $\yy$ is some  nonzero vector, $\braket{\cdot\,,\cdot}$ is an inner product, such that $\braket{\alpha\aaa,\beta\bb}=\overline{\alpha}\beta\braket{\aaa,\bb}$, and $\Delta\xx_m=\xx_{m+1}-~\xx_m$ and $\Delta\gg_i(m)=\gg_i(m+1)-\gg_i(m)$. By imposing a minimal number of reasonable additional conditions on the $\gg_i(m)$, we show that the error $\sss_{n,k}-\sss$ has a full asymptotic expansion as $n\to\infty$. We also  show that actual convergence acceleration takes place and we provide a complete classification of it.
\end{abstract}

\vspace{1cm} \noindent {\bf Mathematics Subject Classification
2010:} 65B05; 65B10; 40A05; 40A25.

\vspace{1cm} \noindent {\bf Keywords and expressions:}
acceleration of convergence; vector extrapolation methods; vectorized generalized Richardson extrapolation process.

\thispagestyle{empty}
\newpage
\pagenumbering{arabic}

\section{Introduction}
Let $\mathbb{X}$ be a  finite or infinite dimensional linear inner product space with the inner product   $\braket{\cdot\,,\cdot}$  defined  such that $\braket{\alpha\aaa,\beta\bb}=\overline{\alpha}\beta\braket{\aaa,\bb}$, and let $\|\cdot\|$ be the norm induced by this inner product, namely, $\|\zz\|=\sqrt{\braket{\zz,\zz}}$.

Let   $\{\xx_m\}$ be a vector sequence in  $\mathbb{X}$, and let $\xx_m$ have  an asymptotic expansion of the form

\beq \label{eqmju10a}\xx_m\sim \sss+\sum^\infty_{i=1}\alpha_i\gg_i(m) \quad\text{as $m\to\infty$},\eeq
$\sss$ being the limit or antilimit of $\{\xx_m\}$ and
 $\{\gg_i(m)\}^\infty_{i=1}$ being an asymptotic scale as $m\to\infty$, in the sense  that
\beq \label{eqmju10b}\lim_{m\to\infty}
\frac{\|\gg_{i+1}(m)\|}{\|\gg_i(m)\|}=0,\quad i=1,2,\ldots\ .\eeq
 The vector sequences $\{\gg_i(m)\}^\infty_{m=0}$, $i=1,2,\ldots,$ are known, as well as $\{\xx_m\}$. The scalars $\alpha_i$ do not have to be known.
By \eqref{eqmju10a}, we mean
\beq \label{eqmju10aa}\bigg\|\xx_m- \sss-\sum^r_{i=1}\alpha_i\gg_i(m) \bigg\|=o(\|\gg_r(m)\|)\quad\text{as $m\to\infty$},\quad\forall\ r\geq1.\eeq
Of course, the summation $\sum^\infty_{i=1}\alpha_i\gg_i(m)$ in  the asymptotic expansion of \eqref{eqmju10a} does not need to be convergent; it may diverge in general.
Finally, the $\alpha_i$ are not all nonzero necessarily; some may be zero in general.\footnote{We may think of an Euler--Maclaurin expansion that may not be full, for example.}

Clearly, if $\alpha_1\neq0$ and $\lim_{m\to\infty}\gg_1(m)=\00$, then $\{\xx_m\}$ converges and we have  $\lim_{m\to\infty}\xx_m=\sss$. If $\alpha_1\neq0$ and
$\lim_{m\to\infty}\gg_1(m)$ does not exist, then $\{\xx_m\}$ diverges.
In case it converges, the convergence of the sequence $\{\xx_m\}$ can be accelerated via a suitable extrapolation method, which will produce good approximations to $\sss$.  Extrapolation methods can be very useful for obtaining good approximations to $\sss$ also in case of divergence, at least in some cases.

 In this work, we would like to analyze the convergence and acceleration properties of one such method, namely, the {\em vector E-algorithm} of Brezinski \cite{Brezinski:1980:GEA}. See also Brezinski and Redivo Zaglia \cite[Chapter 4, pp. 228--232]{Brezinski:1991:EMT}.
The vector E-algorithm produces from the sequence $\{\xx_m\}$ and the sequences $\{\gg_i(m)\}^\infty_{m=0}$, $i=1,2,\ldots,$ a two-dimensional array of approximations
$\sss_{n,k}$, which are defined via
\beq \label{eqmju17} \sss_{n,k}=\xx_n-\sum^{k}_{i=1}\widetilde{\alpha}_i \gg_i(n),\eeq
 the $\widetilde{\alpha}_i$  being the solution to the $k\times k$ linear system
    \beq \label{eqmju18} \sum^k_{i=1}\braket{\yy,\Delta\gg_i(m)}\widetilde{\alpha}_i=\braket{\yy,\Delta\xx_m},\quad m=n,n+1,\ldots,n+k-1. \eeq
 Here  $\yy$ is a  nonzero vector in $\mathbb{X}$, $\Delta\xx_m=\xx_{m+1}-\xx_m$ and $\Delta\gg_i(m)=\gg_i(m+1)-\gg_i(m)$.
Taken together, and by Cramer's rule, \eqref{eqmju17} and \eqref{eqmju18} give rise to
the following determinant representation for $\sss_{n,k}$:
\beq \label{eqmju19} \sss_{n,k}=
f_{n,k}(\xx); \quad \xx\equiv \{\xx_m\},\eeq where, for an arbitrary vector sequence $\vv\equiv\{\vv_m\}$ in $\mathbb{X}$,
\beq \label{eqmju20}
f_{n,k}(\vv)=\frac{N_{n,k}(\vv)}{D_{n,k}}=\frac{\begin{vmatrix}\vv_n & \braket{\yy,\Delta \vv_n}& \cdots &\braket{\yy,\Delta \vv_{n+k-1}}\\
\gg_1(n) & \braket{\yy,\Delta \gg_1(n)}& \cdots &\braket{\yy,\Delta \gg_1(n+k-1)}\\ \vdots &\vdots && \vdots \\
\gg_k(n) & \braket{\yy,\Delta \gg_k(n)}& \cdots &\braket{\yy,\Delta \gg_k(n+k-1)}\end{vmatrix}}
{\begin{vmatrix}  \braket{\yy,\Delta \gg_1(n)}& \cdots &\braket{\yy,\Delta \gg_1(n+k-1)}\\ \vdots && \vdots \\
\braket{\yy,\Delta \gg_k(n)}& \cdots &\braket{\yy,\Delta \gg_k(n+k-1)}\end{vmatrix}}.
\eeq
Of course, we are assuming that $D_{n,k}$, the denominator determinant of $\sss_{n,k}$, is nonzero.
Note also that $N_{n,k}(\xx)$, the numerator determinant of $\sss_{n,k}$, which is a vector, is to be interpreted as its expansion with respect to its first column.

A recursion relation for the $\sss_{n,k}$ is given in Brezinski \cite{Brezinski:1980:GEA}. Different recursion relations for this method are also given in  Ford and Sidi \cite{Ford:1988:RAV}.

For convenience, let us arrange the $\sss_{n,k}$  in a two-dimensional array as in Table~\ref{table:snk}, where $\sss_{n,0}=\xx_n$, $n=0,1,\ldots\ .$

\begin{table}[h]
\[ \begin{array}{cccc}
 \sss_{0,0}&\sss_{1,0} & \sss_{2,0}& \cdots\\
\sss_{0,1}&\sss_{1,1} & \sss_{2,1}& \cdots\\
\sss_{0,2}&\sss_{1,2} & \sss_{2,2}& \cdots\\
\vdots&\vdots&\vdots&\ddots
  \end{array} \]

\caption {\label{table:snk} The extrapolation table.}
\end{table}

\section{A convergence theory}
\setcounter{equation}{0}
Convergence acceleration properties of the rows $\{\sss_{n,k}\}^\infty_{n=0}$, $k=1,2,\ldots,$  of the extrapolation table, that is,
convergence acceleration properties of $\sss_{n,k}$ as $n\to\infty$ with $k$ fixed,  have been considered  under different  conditions in the works of  Wimp \cite[Chapter 10, p. 180, Theorem 1]{Wimp:1981:STA} and Matos \cite{Matos:1991:ARV}. Here we provide a new study, whose results are summarized in  Theorem \ref{th:FGREP} that is stated and proved below. This theorem   provides optimal results in the form of
\begin{enumerate}
\item  a genuine asymptotic expansion for $\sss_{n,k}$ as $n\to\infty$, and
\item   a definitive and quantitative convergence acceleration result.
\end{enumerate}
The  technique we use to prove Theorem \ref{th:FGREP} is derived in part from Wimp \cite{Wimp:1981:STA} and mostly from Sidi \cite{Sidi:1990:GRE}, with necessary modifications to accommodate vector sequences.
It also involves the notion of  {\em generalized asymptotic expansion}; see   Temme \cite[Chapter 1]{Temme:2015:AMI}, for example. For convenience, we give the precise definition of this notion here.

\begin{definition*} Let $\{\phi_i(m)\}^\infty_{i=1}$ and $\{\psi_i(m)\}^\infty_{i=1}$ be two asymptotic scales as $m\to\infty$. Let also $\{W_m\}^\infty_{m=0}$ be a given sequence. We say that the formal series $\sum^\infty_{i=1}a_i\phi_i(m)$ is the {\em generalized asymptotic expansion of $W_m$ with respect to $\{\psi_i(m)\}^\infty_{i=1}$ as $m\to\infty$,} written in the form
$$  W_m\sim \sum^\infty_{i=1}a_i\phi_i(m)\quad \text{as $m\to\infty$};\quad \{\psi_i\},$$ provided
$$ W_m-\sum^r_{i=1}a_i\phi_i(m)=o(\psi_r(m))\quad \text{as $m\to\infty$,}\quad \forall\ r\geq1.$$
\end{definition*}

The notation we use in the sequel  is precisely that introduced in the previous section.

\begin{theorem}\label{th:FGREP} Let the sequence $\{\xx_m\}$ be as in
\eqref{eqmju10a}, with the $\gg_i(m)$ satisfying \eqref{eqmju10b},  and
\beq \label{eqmju21}\begin{split}  \lim_{m\to\infty}\frac{\braket{\yy,\gg_i(m+1)}}{\braket{\yy,\gg_i(m)}}=b_i\neq 1,\quad i=1,2,\ldots,\\
b_i\  \text{distinct};\quad |b_1|>|b_2|>\cdots;\quad \lim_{i\to\infty}b_i=0,\end{split}
\eeq
in addition. Assume also that

\beq \label{eqmju22a}\lim_{m\to\infty}\frac{\gg_i(m)}{\braket{\yy,\Delta\gg_i(m)}}=\widehat{\gg}_i\neq\00,\quad i=1,2,\ldots,\eeq
and define
\beq \label{eqmju22b}\widehat{\hh}_{k,i}=
\begin{vmatrix}\widehat{\gg}_i& 1& b_i&\cdots&b_i^{k-1}\\
\widehat{\gg}_1& 1& b_1&\cdots&b_1^{k-1}\\ \vdots&\vdots& \vdots &&\vdots\\
\widehat{\gg}_k& 1& b_k&\cdots&b_k^{k-1}\end{vmatrix},\quad i\geq k+1.\eeq
Then the following are true:
\begin{enumerate}
\item There holds
\beq \label{eqmju23}  \lim_{m\to\infty}\frac{\braket{\yy,\Delta\gg_i(m+1)}}{\braket{\yy,\Delta\gg_i(m)}}=b_i,
\quad i=1,2,\ldots,\eeq
in addition to \eqref{eqmju21}. Furthermore,
the sequence $\{\braket{\yy,\Delta\gg_i(m)}\}^\infty_{i=1}$ is an asymptotic scale as $m\to\infty$, that is,
\beq \label{eqmju22c} \lim_{m\to\infty}\frac{\braket{\yy,\Delta\gg_{i+1}(m)}}{\braket{\yy,\Delta\gg_i(m)}}=0,\quad i=1,2,\ldots\ .\eeq

\item
With arbitrary $\vv\equiv \{\vv_m\}^\infty_{m=0}$,  $f_{n,k}(\vv)$ defined in \eqref{eqmju20} exist for all $n\geq n_0$, $n_0$ being some positive integer independent of $\vv$.
\item
 \begin{enumerate}
 \item
 With $\gg_i\equiv \{\gg_i(m)\}^\infty_{m=0}$, we have
   $f_{n,k}(\gg_i)=~\00$ for $i=1,\ldots,k,$ while for $i\geq k+1$,
\beq \label{eqmju55}\begin{split}
\frac{ f_{n,k}(\gg_i)}{\braket{\yy,\Delta\gg_i(n)}}&\sim \frac{\widehat{\hh}_{k,i}}{V(b_1,\ldots,b_k)}\quad\text{as $n\to\infty$},\quad \text{if $\widehat{\hh}_{k,i}\neq \00$},
\\
\frac{ f_{n,k}(\gg_i)}{\braket{\yy,\Delta\gg_i(n)}}&=o(1)\quad\text{as $n\to\infty$},\quad \text{if $\widehat{\hh}_{k,i}=\00$}, \end{split}
\eeq and also
\beq  \label{eqmju55a}\begin{split}
  \|f_{n,k}(\gg_i)\|&\sim C_{k,i}\|\gg_i(n)\|\quad\text{as $n\to\infty$},
  \quad\text{if $\widehat{\hh}_{k,i}\neq \00$}, \\
  \|f_{n,k}(\gg_i)\|&=o(\|\gg_i(n)\|)\quad\text{as $n\to\infty$},
  \quad\text{if $\widehat{\hh}_{k,i}=\00$},\end{split}
  \eeq
  where
\beq  \label{eqmju55ak} \quad C_{k,i}=\frac{1}{|V(b_1,\ldots,b_k)|} \frac{\|\widehat{\hh}_{k,i}\|}{\|\widehat{\gg}_i\|}\eeq
and $V(c_1,\ldots, c_k)$ is the Vandermonde determinant of $c_1,\ldots,c_k$, given as in
\beq \label{Vander}V(c_1,\ldots,c_k)=\begin{vmatrix} 1& c_1&\cdots&c_1^{k-1}\\ 1& c_2 &\cdots&c_2^{k-1}\\ \vdots& \vdots&& \vdots\\ 1& c_k&\cdots&c_k^{k-1}\end{vmatrix}=\prod_{1\leq i<j\leq k}(c_j-c_i).\eeq
\item
In addition, for $i\geq k+1,$
$\{f_{n,k}(\gg_i)\}^\infty_{i=k+1}$ is an asymptotic scale as $n\to\infty$, in the following generalized sense:

\beq  \label{eqmju10k}\lim_{n\to\infty}\frac{\|f_{n,k}(\gg_{i+1})\|}{\|\gg_{i}(n)\|}=0, \quad i\geq k+1.\eeq
\end{enumerate}
\item $\sss_{n,k}$ has a  genuine generalized asymptotic expansion with respect to the asymptotic scale $\{\gg_i(n)\}^\infty_{i=1}$ as $n\to\infty$; namely,
 \beq \label{eqmju31}\sss_{n,k}\sim\sss+\sum^\infty_{i=k+1}\alpha_i f_{n,k}(\gg_i)\quad\text{as $n\to\infty$};\quad \{\gg_i\}, \eeq
 in the sense that
 \beq \label{eqmju31a}\sss_{n,k}-\sss-\sum^{r}_{i=k+1}\alpha_i f_{n,k}(\gg_i)=o(\gg_r(n))\quad\text{as $n\to\infty$},\quad \forall \ r\geq k+1.\eeq
 We also have
  \beq \label{eqmju31e}\sss_{n,k}-\sss-\sum^{r}_{i=k+1}\alpha_i f_{n,k}(\gg_i)=o(f_{n,k}(\gg_r))\quad\text{as $n\to\infty$},\quad \text{if $\widehat{\hh}_{k,r}\neq\00$}. \eeq
 \item
 Let  $\alpha_{k+\mu}$ be  the first nonzero $\alpha_{k+i}$ with $i\geq k+1$. Then the following are true:
 \begin{enumerate}
 \item $\sss_{n,k}$  satisfies
 \beq \label{eqmju31t} \sss_{n,k}-\sss=O(\gg_{k+\mu}(n))\quad\text{as $n\to\infty$}, \eeq
 and, therefore,  also
 \beq \label{eqmju31p} \sss_{n,k+j}-\sss=O(\gg_{k+\mu}(n))\quad\text{as $n\to\infty$}, \quad
 j=0,1,\ldots,k+\mu-1.\eeq
 \item
 We also have
 \beq \label{eqmju31c}\sss_{n,k}-\sss\sim\alpha_{k+\mu}f_{n,k}(\gg_{k+\mu})\quad\text{as $n\to\infty$},\quad\text{if $\widehat{\hh}_{k,k+\mu}\neq\00$}.
 \eeq
   As a result, provided $\widehat{\hh}_{k+j,k+\mu}\neq\00$, $j=0,1,\ldots,\mu-1$, we also have
 \beq \label{eqmju31f}\sss_{n,k+j}-\sss\sim\alpha_{k+\mu}f_{n,k+j}(\gg_{k+\mu})\quad\text{as $n\to\infty$},\quad j=0,1,\ldots,\mu-1, \eeq which also implies
 \beq \label{eqmju31d} \|\sss_{n,k+j}-\sss\| \sim |\alpha_{k+\mu}|\,C_{k+j,k+\mu}\,\|\gg_{k+\mu}(n)\| \quad\text{as $n\to\infty$}, \quad j=0,1,\ldots,\mu-1. \eeq
 \item
 If  $\alpha_k\neq0$ and $\widehat{\hh}_{k-1,k}\neq\00$, then
 \beq \label{eqmju31b}\frac{\|\sss_{n,k+j}-\sss\|}{\|\sss_{n,k-1}-\sss\|}
 =O\bigg(\frac{\|\gg_{k+\mu}(n)\|}{\|\gg_{k}(n)\|}\bigg)=o(1)\quad\text{as $n\to\infty$},\quad
 j=0,1,\ldots,k+\mu-1.\eeq
\end{enumerate}

  \end{enumerate}
\end{theorem}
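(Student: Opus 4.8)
The plan is to treat $f_{n,k}$ as a fixed linear operation on sequences. Comparing \eqref{eqmju17}--\eqref{eqmju18}, for any sequence $\vv$ we have $f_{n,k}(\vv)=\vv_n-\sum_{i=1}^k\widetilde\alpha_i(\vv)\,\gg_i(n)$, where $\widetilde\alpha(\vv)$ solves the $k\times k$ system \eqref{eqmju18} with right-hand side $(\braket{\yy,\Delta\vv_m})_{m=n}^{n+k-1}$. Since this right-hand side is linear in $\vv$ and $\vv\mapsto\vv_n$ is linear, $\vv\mapsto f_{n,k}(\vv)$ is a linear, $\mathbb{X}$-valued functional. Three facts drop out at once: $f_{n,k}(\sss\,\11)=\sss$ (a constant sequence gives zero right-hand side, hence $\widetilde\alpha=\00$); $f_{n,k}(\gg_j)=\00$ for $1\le j\le k$ (then $\widetilde\alpha_i=\delta_{ij}$); and, writing $\xx_m=\sss+\sum_{i=1}^r\alpha_i\gg_i(m)+\rr^{(r)}_m$ with $\rr^{(r)}_m=o(\gg_r(m))$, linearity gives $\sss_{n,k}-\sss=\sum_{i=k+1}^r\alpha_i f_{n,k}(\gg_i)+f_{n,k}(\rr^{(r)})$. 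Everything therefore reduces to the asymptotics of $f_{n,k}(\gg_i)$ for $i\ge k+1$ and a bound on $f_{n,k}(\rr^{(r)})$.

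Part 1 is elementary bookkeeping. With $u_m=\braket{\yy,\gg_i(m)}$ and $u_{m+1}/u_m\to b_i$, one computes $\Delta u_{m+1}/\Delta u_m\to(b_i^2-b_i)/(b_i-1)=b_i$ using $b_i\ne1$, which is \eqref{eqmju23}; and \eqref{eqmju22c} follows from $\braket{\yy,\Delta\gg_j(m)}\sim(b_j-1)\braket{\yy,\gg_j(m)}$ combined with the comparison $|\braket{\yy,\gg_j(m)}|\asymp\|\gg_j(m)\|$ (itself a consequence of \eqref{eqmju22a}) and the scale property \eqref{eqmju10b}. For Parts 2 and 3(a) I would factor $\braket{\yy,\Delta\gg_j(n)}$ out of the $j$th row of both determinants in \eqref{eqmju20}: by \eqref{eqmju23} the scalar entries become $b_j^l+o(1)$ and by \eqref{eqmju22a} the first-column vector entries become $\widehat{\gg}_j+o(1)$. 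Continuity of the (vector-valued) multilinear determinant then sends the rescaled denominator to $V(b_1,\ldots,b_k)\ne0$, giving $D_{n,k}\ne0$ for $n\ge n_0$ and hence Part 2, and the rescaled numerator to $\widehat{\hh}_{k,i}$ of \eqref{eqmju22b}; dividing gives \eqref{eqmju55}, and replacing $|\braket{\yy,\Delta\gg_i(n)}|$ by $\|\gg_i(n)\|/\|\widehat{\gg}_i\|$ gives \eqref{eqmju55a}--\eqref{eqmju55ak}. Part 3(b) is then immediate from $\|f_{n,k}(\gg_{i+1})\|=O(\|\gg_{i+1}(n)\|)$ and \eqref{eqmju10b}.

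The main obstacle is Part 4, namely bounding $f_{n,k}(\rr^{(r)})$: the hypothesis controls $\rr^{(r)}_m$ only in norm, whereas $f_{n,k}$ also feeds on the forward differences $\braket{\yy,\Delta\rr^{(r)}_{n+l}}$. I would first note $|\braket{\yy,\rr^{(r)}_m}|\le\|\yy\|\,\|\rr^{(r)}_m\|=o(\|\gg_r(m)\|)$ by Cauchy--Schwarz, and that $\|\gg_r(n+l)\|\asymp\|\gg_r(n)\|$ for each fixed $l$ (from $\braket{\yy,\gg_r(n+l)}/\braket{\yy,\gg_r(n)}\to b_r^l$ together with $\|\gg_r\|\asymp|\braket{\yy,\gg_r}|$), so that $\braket{\yy,\Delta\rr^{(r)}_{n+l}}=o(\|\gg_r(n)\|)$. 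Applying Cramer's rule to $\widetilde\alpha_i(\rr^{(r)})$ and expanding along the replaced column, the numerator is $o(\|\gg_r(n)\|)\cdot O(\prod_{i'\ne i}|\braket{\yy,\Delta\gg_{i'}(n)}|)$ while $|D_{n,k}|\asymp\prod_{i'}|\braket{\yy,\Delta\gg_{i'}(n)}|$, so $\widetilde\alpha_i(\rr^{(r)})=o(\|\gg_r(n)\|/|\braket{\yy,\Delta\gg_i(n)}|)$; since $\|\gg_i(n)\|/|\braket{\yy,\Delta\gg_i(n)}|\to\|\widehat{\gg}_i\|$, each term $\widetilde\alpha_i(\rr^{(r)})\gg_i(n)$, as well as $\rr^{(r)}_n$, is $o(\gg_r(n))$, whence $f_{n,k}(\rr^{(r)})=o(\gg_r(n))$. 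This is precisely \eqref{eqmju31a}, and \eqref{eqmju31e} follows since $\|f_{n,k}(\gg_r)\|\asymp\|\gg_r(n)\|$ when $\widehat{\hh}_{k,r}\ne\00$.

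Part 5 is harvested from Part 4. As $\alpha_{k+1}=\cdots=\alpha_{k+\mu-1}=0$, \eqref{eqmju31a} with $r=k+\mu$ gives $\sss_{n,k}-\sss=\alpha_{k+\mu}f_{n,k}(\gg_{k+\mu})+o(\gg_{k+\mu}(n))$; by \eqref{eqmju55a} the leading term is $O(\gg_{k+\mu}(n))$, yielding \eqref{eqmju31t}, and it dominates exactly when $\widehat{\hh}_{k,k+\mu}\ne\00$, giving \eqref{eqmju31c} and, via \eqref{eqmju55ak}, \eqref{eqmju31d}. The level-$(k+j)$ statements follow by rerunning the argument with $k$ replaced by $k+j$. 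For $0\le j\le\mu-1$ the first surviving coefficient beyond index $k+j$ is still $\alpha_{k+\mu}$, which gives the sharp equivalence \eqref{eqmju31f} (when $\widehat{\hh}_{k+j,k+\mu}\ne\00$) and hence \eqref{eqmju31d}; for $\mu\le j\le k+\mu-1$ that coefficient has index $>k+\mu$, so the error is $o(\gg_{k+\mu}(n))$, and in all cases $\sss_{n,k+j}-\sss=O(\gg_{k+\mu}(n))$, which is \eqref{eqmju31p}. Finally, for \eqref{eqmju31b} I would combine the numerator bound $\|\sss_{n,k+j}-\sss\|=O(\|\gg_{k+\mu}(n)\|)$ with the sharp denominator asymptotics at level $k-1$: when $\alpha_k\ne0$ and $\widehat{\hh}_{k-1,k}\ne\00$, the case $j=0$, $\mu=1$ of \eqref{eqmju31d} applied at level $k-1$ gives $\|\sss_{n,k-1}-\sss\|\sim|\alpha_k|\,C_{k-1,k}\,\|\gg_k(n)\|$, and since $k+\mu>k$ the quotient is $O(\|\gg_{k+\mu}(n)\|/\|\gg_k(n)\|)=o(1)$ by \eqref{eqmju10b}.
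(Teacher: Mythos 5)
Your proposal is correct and follows essentially the same route as the paper: row-factoring of the determinants to reduce $D_{n,k}$ and $N_{n,k}(\gg_i)$ to Vandermonde-type limits, linearity of $f_{n,k}$ to split off the first $r$ terms of the expansion, and Cauchy--Schwarz together with $\|\gg_r(n+l)\|=O(\|\gg_r(n)\|)$ to show the remainder is $o(\gg_r(n))$. The only cosmetic difference is that in Part 4 you bound the coefficients $\widetilde\alpha_i(\rr^{(r)})$ via Cramer's rule and cofactor expansion, whereas the paper factors the rows of the single numerator determinant $N_{n,k}(\eepsilon_r)$ and shows its rescaled first row vanishes in the limit; the two computations are equivalent.
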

\begin{proof}
\noindent{\em Proof of part 1:}
 We first note that
 $$ \frac{\braket{\yy,\Delta\gg_i(m+1)}}{\braket{\yy,\Delta\gg_i(m)}}=
 \frac{\braket{\yy,\gg_i(m+1)}}{\braket{\yy,\gg_i(m)}}
 \frac
 {\displaystyle\frac{\braket{\yy,\gg_i(m+2)}}{\braket{\yy,\gg_i(m+1)}}-1}
 {\displaystyle\frac{\braket{\yy,\gg_i(m+1)}}{\braket{\yy,\gg_i(m)}}-1}.
 $$
 Taking now limits as $m\to\infty$, and invoking \eqref{eqmju21}, we obtain
\eqref{eqmju23}.

Next, by  \eqref{eqmju22a}, we have the asymptotic equality
\beq \label{eqmju58a}\gg_i(m)\sim \braket{\yy,\Delta \gg_i(m)}\,\widehat{\gg}_i \quad\text{as $m\to\infty$},\eeq which, upon taking norms, gives the asymptotic equality
\beq \label{eqmju58b}|\braket{\yy,\Delta \gg_i(m)}|\sim\frac{\| \gg_i(m)\|}{\|\widehat{\gg}_i\|}\quad \text{as $m\to\infty$}. \eeq
 Therefore,
$$ \frac{|\braket{\yy,\Delta \gg_{i+1}(m)}|}{|\braket{\yy,\Delta \gg_i(m)}|}\sim
\frac{\|\widehat{\gg}_{i}\|}{\|\widehat{\gg}_{i+1}\|}
\frac{\|\gg_{i+1}(m)\|}{\|\gg_{i}(m)\|}\quad\text{as $m\to\infty$}.$$
Invoking now the fact that $\{\gg_i(m)\}^\infty_{i=1}$  itself is an asymptotic scale as $m\to\infty$, as in \eqref{eqmju10b}, the result in
\eqref{eqmju22c} follows.
\medskip

\noindent{\em Proof of part 2:}
By \eqref{eqmju20}, $f_{n,k}(\vv)$ for arbitrary $\vv\equiv\{\vv_m\}$ exists provided $D_{n,k}$, the denominator determinant, is nonzero. Therefore, we need to  analyze only  the  determinant $D_{n,k}$ in \eqref{eqmju20}.
Let us set
\beq \label{eqmju24}\eta_{i,j}(m)=\frac{\braket{\yy,\Delta\gg_i(m+j)}}{\braket{\yy,\Delta\gg_i(m)}},\quad i,j=1,2,\ldots, \eeq and observe  that
\beq \label{eqmju25e}\eta_{i,j}(m)=\prod^j_{r=1}\frac{\braket{\yy,\Delta\gg_i(m+r)}}
{\braket{\yy,\Delta\gg_i(m+r-1)}}. \eeq
Letting $m\to\infty$ and invoking \eqref{eqmju23}, we obtain
\beq  \label{eqmju25} \lim_{m\to\infty}\eta_{i,j}(m)=b_i^j. \eeq
Factoring out $\braket{\yy,\Delta\gg_j(n)}$ from the $j$th row of $D_{n,k}$, $j=1,\ldots,k,$ we have
\beq \label{eqmju25f}\frac{D_{n,k}}{\prod^k_{j=1} \braket{\yy,\Delta\gg_j(n)}}=
\begin{vmatrix}1&\eta_{1,1}(n)&\cdots&\eta_{1,k-1}(n)\\
\vdots&\vdots&&\vdots\\ 1&\eta_{k,1}(n)&\cdots&\eta_{k,k-1}(n)\end{vmatrix}\equiv\psi_{n,k},\eeq
which, upon letting $n\to\infty$, gives
\beq \label{eqmju26}\lim_{n\to\infty}\psi_{n,k}=V(b_1,b_2,\ldots, b_k)=
\prod_{1\leq i<j\leq k}(b_j-b_i),\eeq
 this limit being nonzero since the $b_i$ are distinct.
Therefore,
$$D_{n,k}\sim V(b_1,b_2,\ldots, b_k)\prod^k_{j=1} \braket{\yy,\Delta\gg_j(n)}\quad\text{as $n\to\infty$}.$$
From this and from \eqref{eqmju58b}, we conclude that $D_{n,k}\neq0$ for all large $n$.
Since $D_{n,k}$ is also independent of $\vv$, we have that $D_{n,k}\neq0$ for all $n\geq n_0$, $n_0$ being independent of $\vv$ trivially.
\medskip

\noindent{\em Proof of part 3:}
We now turn to $f_{n,k}(\gg_i)=N_{n,k}(\gg_i)/D_{n,k}$, where
\beq \label{eqmju30}N_{n,k}(\gg_i)=
\begin{vmatrix}\gg_i(n)& \braket{\yy,\Delta \gg_i(n)}& \cdots &\braket{\yy,\Delta \gg_i(n+k-1)}\\
\gg_1(n) & \braket{\yy,\Delta \gg_1(n)}& \cdots &\braket{\yy,\Delta \gg_1(n+k-1)}\\ \vdots &\vdots && \vdots \\
\gg_k(n) & \braket{\yy,\Delta \gg_k(n)}& \cdots &\braket{\yy,\Delta \gg_k(n+k-1)}\end{vmatrix}.\eeq
We first observe  that $N_{n,k}(\gg_i)=\00$ for $i=1,\ldots,k,$ since the determinant in \eqref{eqmju30} has two identical rows when $1\leq i\leq k$. This proves that $f_{n,k}(\gg_i)=\00$ for $i=1,\ldots,k.$ Therefore, we consider the case  $i\geq k+1$.
Proceeding as in the analysis of $D_{n,k}$, let us  factor out $\braket{\yy,\Delta \gg_i(n)}$ and $\braket{\yy,\Delta \gg_1(n)}$,\ldots,$\braket{\yy,\Delta \gg_k(n)}$ from the $k+1$ rows of $N_{n,k}(\gg_i)$. We obtain

\beq \label{eqmju33} \frac{N_{n,k}(\gg_i)}{\braket{\yy,\Delta \gg_i(n)}\prod^k_{j=1}\braket{\yy,\Delta \gg_j(n)}}=
\begin{vmatrix}\frac{\gg_i(n)}{\braket{\yy,\Delta \gg_i(n)}}&1&\eta_{i,1}(n)&\cdots&\eta_{i,k-1}(n)\\
\frac{\gg_1(n)}{\braket{\yy,\Delta \gg_1(n)}}&1&\eta_{1,1}(n)&\cdots&\eta_{1,k-1}(n)\\
\vdots&\vdots&\vdots&&\vdots\\
\frac{\gg_k(n)}{\braket{\yy,\Delta \gg_k(n)}}&1&\eta_{k,1}(n)&\cdots&\eta_{k,k-1}(n)\end{vmatrix}\equiv\hh_{k,i}(n), \eeq
which, upon letting $n\to\infty$ and invoking \eqref{eqmju25}, \eqref{eqmju22a}, and \eqref{eqmju22b}, gives
\beq \lim_{n\to\infty}\hh_{k,i}(n)=\widehat{\hh}_{k,i}.\eeq
Combining  now \eqref{eqmju33} with \eqref{eqmju25f}, we obtain
\beq f_{n,k}(\gg_i)=\frac{\braket{\yy,\Delta \gg_i(n)}\hh_{k,i}(n)}{\psi_{n,k}},\eeq  which, upon letting $n\to\infty$, gives
\beq
\lim_{n\to\infty}\frac{f_{n,k}(\gg_i)}{\braket{\yy,\Delta \gg_i(n)}}=\frac{\widehat{\hh}_{k,i}} {V(b_1,\ldots,b_k)},\eeq
  from which, \eqref{eqmju55} follows. \eqref{eqmju55a} is obtained by taking norms in \eqref{eqmju55} and by making use of \eqref{eqmju58b}.

Finally,
$$ \frac{\|f_{n,k}(\gg_{i+1})\|}{\|\gg_{i}(n)\|}=\frac{|\braket{\yy,\Delta \gg_{i+1}(n)}|\, \|\hh_{k,i}(n)\|}
{|\psi_{n,k}|\, \|\gg_{i}(n)\|}, $$ which, upon letting $n\to\infty$ and invoking \eqref{eqmju58b}, gives

$$ \frac{\|f_{n,k}(\gg_{i+1})\|}{\|\gg_{i}(n)\|}\sim\frac{1}{|V(b_1,\ldots,b_k)|}
\frac{\|\hh_{k,i}(n)\|}{\|\widehat{\gg}_{i+1}\|}\frac{\|\gg_{i+1}(n)\|}{\|\gg_{i}(n)\|}\quad \text{as $n\to\infty$}.$$ Invoking here \eqref{eqmju10b}, and noting that $\|\hh_{k,i}(n)\|$ is bounded in $n$, we obtain \eqref{eqmju10k}.
\medskip

\noindent{\em Proof of part 4:}
We now turn to $\sss_{n,k}$. First, we note that
\beq \label{eqmju27}\sss_{n,k}-\sss=f_{n,k}(\xx)-\sss=\frac{N_{n,k}(\xx-\sss)}{D_{n,k}};\quad
\xx-\sss\equiv\{\xx_m-\sss\}, \eeq
with
\beq \label{eqmju28} N_{n,k}(\xx-\sss)=\begin{vmatrix}\xx_n-\sss & \braket{\yy,\Delta \xx_n}& \cdots &\braket{\yy,\Delta \xx_{n+k-1}}\\
\gg_1(n) & \braket{\yy,\Delta \gg_1(n)}& \cdots &\braket{\yy,\Delta \gg_1(n+k-1)}\\ \vdots &\vdots && \vdots \\
\gg_k(n) & \braket{\yy,\Delta \gg_k(n)}& \cdots &\braket{\yy,\Delta \gg_k(n+k-1)}\end{vmatrix},\eeq
 because the coefficient of $\xx_n$ in the expansion of $N_{n,k}(\xx)$ is $D_{n,k}$.
By \eqref{eqmju10a}, the elements in the first row of $N_{n,k}(\xx-\sss)$ have the asymptotic expansions
\begin{gather}\xx_n-\sss\sim  \sum^\infty_{i=1}\alpha_i\gg_i(n)\quad \text{as $n\to\infty$},\notag\\
\braket{\yy,\Delta\xx_{n+j}}\sim\sum^\infty_{i=1}\alpha_i\braket{\yy,\Delta\gg_i(n+j)}\quad\text{as $n\to\infty$},\quad
 j=0,1,\ldots\ .\notag\end{gather}
Multiplying the $(i+1)$st row of $N_{n,k}(\xx-\sss)$ in \eqref{eqmju28} by $\alpha_i$ and subtracting from the first row, $i=1,\ldots,k,$ we obtain

\begin{multline} \label{eqmju29q} N_{n,k}(\xx-\sss)\sim\\
\begin{vmatrix}\sum^\infty_{i=k+1}\alpha_i\gg_i(n) & \sum^\infty_{i=k+1}\alpha_i\braket{\yy,\Delta \gg_i(n)}& \cdots &\sum^\infty_{i=k+1}\alpha_i\braket{\yy,\Delta \gg_i(n+k-1)}\\
\gg_1(n) & \braket{\yy,\Delta \gg_1(n)}& \cdots &\braket{\yy,\Delta \gg_1(n+k-1)}\\ \vdots &\vdots && \vdots \\
\gg_k(n) & \braket{\yy,\Delta \gg_k(n)}& \cdots &\braket{\yy,\Delta \gg_k(n+k-1)}\end{vmatrix}\end{multline}
as  $n\to\infty$.  Taking   the summations $\sum^\infty_{i=k+1}$ and the multiplicative factors $\alpha_i$ from the first row outside the determinant in \eqref{eqmju29q},  we have

\beq \label{eqmju29}N_{n,k}(\xx-\sss)\sim\sum^\infty_{i=k+1}\alpha_iN_{n,k}(\gg_i)\quad\text{as $n\to\infty$};\quad \gg_i\equiv\{\gg_i(m)\}^\infty_{m=0}, \eeq
with $N_{n,k}(\gg_i)$ as in \eqref{eqmju30}.
Substituting \eqref{eqmju29} in \eqref{eqmju27}, we obtain
 the asymptotic expansion of $\sss_{n,k}$ given in \eqref{eqmju31}.
This asymptotic expansion will be a valid  generalized asymptotic expansion with respect to the asymptotic scale $\{\gg_i(n)\}^\infty_{i=1}$ as $n\to\infty$, provided
\beq \label{eqmju32}\sss_{n,k}-\sss-\sum^r_{i=k+1}\alpha_if_{n,k}(\gg_i)=o(\gg_r(n))\quad\text{as $n\to\infty$},\quad \forall \ r\geq k+1.\eeq

 By \eqref{eqmju10aa}, for arbitrary $r$, we have
\beq \label{eqmju79} \xx_m=\sss+\sum^r_{i=1}\alpha_i\gg_i(m)+\eepsilon_{r}(m);\quad \eepsilon_{r}(m)=o(\gg_{r}(m))\quad \text{as $m\to\infty$.}\eeq
Let us substitute this in \eqref{eqmju27} and proceed exactly as above; we obtain
\beq \label{eqmju97} \sss_{n,k}=\sss+\sum^r_{i=k+1}\alpha_if_{n,k}(\gg_i)+ f_{n,k}(\eepsilon_r).\eeq
Comparing \eqref{eqmju97}  with \eqref{eqmju32}, we realize that \eqref{eqmju32} will be satisfied provided
\beq\label{eqmju97c}f_{n,k}(\eepsilon_r)=o(\gg_{r}(n))\quad \text{as $n\to\infty$}.\eeq

Now, $f_{n,k}(\eepsilon_r)=N_{n,k}(\eepsilon_r)/D_{n,k},$ and
$$ N_{n,k}(\eepsilon_r)=\begin{vmatrix}\eepsilon_r(n)& \braket{\yy,\Delta \eepsilon_r(n)}& \cdots &\braket{\yy,\Delta \eepsilon_r(n+k-1)}\\
\gg_1(n) & \braket{\yy,\Delta \gg_1(n)}& \cdots &\braket{\yy,\Delta \gg_1(n+k-1)}\\ \vdots &\vdots && \vdots \\
\gg_k(n) & \braket{\yy,\Delta \gg_k(n)}& \cdots &\braket{\yy,\Delta \gg_k(n+k-1)}\end{vmatrix}.$$
Let us factor out $\braket{\yy,\Delta \gg_{r}(n)}$ and $\braket{\yy,\Delta \gg_1(n)}$,\ldots,$\braket{\yy,\Delta \gg_k(n)}$ from the $k+1$ rows of this determinant. We obtain

\begin{multline}\label{eqmju89}
\frac{N_{n,k}(\eepsilon_r)}{\braket{\yy,\Delta \gg_{r}(n)}\prod^k_{j=1}\braket{\yy,\Delta \gg_j(n)}}=\\
\begin{vmatrix}\frac{\eepsilon_r(n)}{\braket{\yy,\Delta \gg_{r}(n)}}&\frac{\braket{\yy,\Delta \eepsilon_r(n)}}{\braket{\yy,\Delta \gg_{r}(n)}}&\frac{\braket{\yy,\Delta \eepsilon_r(n+1)}}{\braket{\yy,\Delta \gg_{r}(n)}}&\cdots&\frac{\braket{\yy,\Delta \eepsilon_r(n+k-1)}}{\braket{\yy,\Delta \gg_{r}(n)}}\\
\frac{\gg_1(n)}{\braket{\yy,\Delta \gg_1(n)}}&1&\eta_{1,1}(n)&\cdots&\eta_{1,k-1}(n)\\
\vdots&\vdots&\vdots&&\vdots\\
\frac{\gg_k(n)}{\braket{\yy,\Delta \gg_k(n)}}&1&\eta_{k,1}(n)&\cdots&\eta_{k,k-1}(n)\end{vmatrix}\equiv\phi_{n,k}(\eepsilon_r).
\end{multline}
Dividing now \eqref{eqmju89} by \eqref{eqmju25f}, we obtain
$$ f_{n,k}(\eepsilon_r)=\frac{\phi_{n,k}(\eepsilon_r)}{\psi_{n,k}}\braket{\yy,\Delta \gg_r(n)},$$
which, upon taking norms and invoking \eqref{eqmju58b}, gives
$$ \|f_{n,k}(\eepsilon_r)\|\sim\frac{\|\phi_{n,k}(\eepsilon_r)\|}{|V(b_1,\ldots,b_k)|}\frac{\|\gg_r(n)\|}
{\|\widehat{\gg}_r\|}
\quad \text{as $n\to\infty$.}$$
Therefore, \eqref{eqmju32} will hold provided  $\lim_{n\to\infty}\phi_{n,k}(\eepsilon_r)=\00$.
As we already know, with the exception of the elements in the  first row, all the remaining elements
of the determinant $\phi_{n,k}(\eepsilon_r)$  have finite limits as $n\to\infty$,  by \eqref{eqmju22a} and \eqref{eqmju25}. Therefore, $\lim_{n\to\infty}\phi_{n,k}(\eepsilon_r)=\00$ will hold provided all the elements in the  first row of $ \phi_{n,k}(\eepsilon_r)$  tend to zero as $n\to\infty$. That this is the case is what we show next.

First, by   \eqref{eqmju58b}--\eqref{eqmju25}, as $n\to\infty$,
\beq \label{eqmju43} \|\gg_r(n+j)\|\sim\|\widehat{\gg}_r\|\,|\braket{\yy,\Delta \gg_{r}(n+j)}|\sim
|b_r^j|\,\|\widehat{\gg}_r\|\,|\braket{\yy,\Delta \gg_{r}(n)}|\sim |b_r^j|\,\|{\gg}_r(n)\|.\eeq
Next,  by applying the Cauchy--Schwarz inequality to $\braket{\yy,\Delta \eepsilon_{r}(n+j)}$,
and invoking \eqref{eqmju79} and \eqref{eqmju43}, we have
\begin{align*}|\braket{\yy,\Delta \eepsilon_{r}(n+j)}|&\leq \|\yy\|\,(\|\eepsilon_{r}(n+j+1)\|+\|\eepsilon_{r}(n+j)\|) \\
&=o(\|\gg_{r}(n+j+1)\|)+o(\|\gg_{r}(n+j)\|) \quad\text{as $n\to\infty$}\\
&=o(\|\gg_r(n)\|)\quad\text{as $n\to\infty$}.\end{align*}
Invoking also \eqref{eqmju58b},
 for the elements in  the first row of $\phi_{n,k}(\eepsilon_r)$, we finally obtain
$$ \bigg\|\frac{\eepsilon_r(n)}{\braket{\yy,\Delta \gg_{r}(n)}}\bigg\|=o(1)\quad\text{as $n\to\infty$},$$ and
$$\bigg|\frac{\braket{\yy,\Delta \eepsilon_r(n+j)}}{\braket{\yy,\Delta \gg_{r}(n)}}\bigg|=o(1)
\quad\text{as $n\to\infty$},\quad j=0,1,\ldots,k-1.$$
This implies that $\phi_{n,k}(\eepsilon_r)=o(1)$ {as $n\to\infty$}, and the proof is complete.
\medskip

\noindent{\em Proof of Part 5:} By  $\alpha_{k+j}=0$, $j=1,\ldots,\mu-1,$ and
$\alpha_{k+\mu}\neq0$,\footnote{Note that this already takes into account the possibility that $\alpha_{k+1}\neq0$, in which case, $\mu=1$.} the validity of \eqref{eqmju31t} is obvious.
\eqref{eqmju31p} follows from \eqref{eqmju31t}. The validity of \eqref{eqmju31c}--\eqref{eqmju31d}
can be shown in the same way. As for \eqref{eqmju31b}, we start with
$$\frac{\|\sss_{n,k}-\sss\|}{\|\sss_{n,k-1}-\sss\|}\sim \frac{1}{C_{k-1,k}|\alpha_k|}\frac{\|\sss_{n,k}-\sss\|} {\|\gg_{k}(n)\|}, $$
which follows from \eqref{eqmju31d}, and invoke \eqref{eqmju31p}.
We leave the details
 to the reader. \hfill\end{proof}

\section{Remarks on the convergence theory}
\begin{enumerate}
\item
Note that,  Theorem \ref{th:FGREP} is stated under a minimal number of conditions  on the $\gg_i(m)$ and the $\xx_m$. Of these, the condition in \eqref{eqmju21} is already in \cite[p. 180, Eq.(3)]{Wimp:1981:STA}, while that in \eqref{eqmju22a} is a modification of \cite[p. 180, Eq.(5)]{Wimp:1981:STA}.

\item
    The conditions we have imposed on the $\gg_i(m)$ enable us to proceed with the proof rigorously by employing asymptotic equalities $\sim$ everywhere possible. This should be contrasted with  bounds formulated in terms of the big  $O$ notation, which do not allow us to obtain the optimal results we have in our theorem.\footnote{Recall that $u_m\sim v_m$ as $m\to\infty$ if and only if $\lim_{m\to\infty}(u_m/v_m)=1$. One big advantage of asymptotic equalities is that they allow symmetry and division. That is, if $u_m\sim v_m$  then $v_m\sim u_m$  as well.     In addition, $u_m\sim v_m$ and $u_m'\sim v_m'$  also imply $u_m/u_m'\sim v_m/v_m'$.
    On the other hand, if $u_m=O(v_m)$, we do not necessarily have $v_m=O(u_m)$. In addition, $u_m=O(v_m)$ and $u_m'=O(v_m')$  do not necessarily imply $u_m/u_m'=O(v_m/v_m')$.}
\item
Note that we have imposed essentially two different conditions on the $\gg_i(m)$, namely
\eqref{eqmju21} and \eqref{eqmju22a}. One may  naturally think that these  conditions could contradict each other. In addition, one may think that they   could also contradict the very first and fundamental property in  \eqref{eqmju10b}, which must hold  to make \eqref{eqmju10a} a genuine asymptotic expansion.
 Thus, we need to make sure that there are  no contradictions present in our theorem. For this, it is enough to show that  all three conditions can hold simultaneously, which is the case when
 $$\gg_i(m)\sim\ww_ib_i^m\quad \text{as $m\to\infty$},\quad |b_{i}|>|b_{i+1}|\quad \forall\ i\geq1.$$ It is easy to verify that   \eqref{eqmju10b}, \eqref{eqmju21}, and \eqref{eqmju22a} are satisfied simultaneously in this case.
\item
Due to the possibility that  $\widehat{\hh}_{k,i}=\00$ for some  $i\geq k+1$, we cannot claim a priori that
$\{f_{n,k}(\gg_i)\}^\infty_{i=k+1}$ is an asymptotic scale in the regular sense.
Note, however,  that we can  safely replace \eqref{eqmju55} by
$$\|f_{n,k}(\gg_i)\|=O(\|\gg_i(n)\|)\quad \text{as $n\to\infty$},\quad \forall\ i\geq k+1,$$  whether $\widehat{\hh}_{k,i}\neq \00$ or $\widehat{\hh}_{k,i}= \00$.
\item
$\widehat{\hh}_{k,i}\neq\00$ for all  $i\geq k+1$ if, for example,
 the vectors $\widehat{\gg}_i$ are all linearly independent, which is possible if $\mathbb{X}$ is an infinite dimensional space. This can be seen by expanding the determinant defining $\widehat{\hh}_{k,i}$ in \eqref{eqmju22b} with respect to its first column and realizing that $\widehat{\hh}_{k,i}=c_i\widehat{\gg}_i+\sum^k_{j=1}c_j\widehat{\gg}_j$,
where $c_i$ and the $c_j$ are all nonzero Vandermonde determinants.
In such a case,  by \eqref{eqmju55a} and \eqref{eqmju10b},
$$ \frac{\|f_{n,k}(\gg_{i+1})\|}{\|f_{n,k}(\gg_i)\|}\sim \frac{C_{k,i+1}}{C_{k,i}}\frac{\|\gg_{i+1}(n)\|}
{\|\gg_{i}(n)\|}=o(1)\quad \text{as $n\to\infty$},$$ hence
$\{f_{n,k}(\gg_i)\}^\infty_{i=1}$ is an asymptotic scale in the regular sense.  Therefore, the asymptotic expansion of $\sss_{n,k}$ in \eqref{eqmju31} is a {\em regular} asymptotic expansion, which means that
$$\sss_{n,k}-\sss-\sum^{r}_{i=k+1}\alpha_i f_{n,k}(\gg_i)=o(f_{n,k}(\gg_r))\quad\text{as $n\to\infty$}, \quad \forall\ r\geq k+1.$$

\item
When $\alpha_1\neq0$, the sequence $\{\xx_m\}$ is convergent if $|b_1|<1$; it is divergent if $|b_1|\geq1$. The asymptotic result in \eqref{eqmju31a}, which is always true,  shows clearly that $\sss_{n,k}$ converges to $\sss$ faster than $\xx_n$ when $\{\xx_m\}$ is convergent. In case $\{\xx_m\}$ is divergent,
by the assumption  that $\lim_{i\to\infty}b_i=0$, we have that $|b_i|<1$, $i\geq p$, for some integer $p$, and $\sss_{n,k}$ converges when $k\geq p$.

\item Consider the case
$$\alpha_k\neq0,\quad \alpha_{k+1}=\cdots=\alpha_{k+\mu-1}=0,\quad \alpha_{k+\mu}\neq 0.$$
By \eqref{eqmju31a}--\eqref{eqmju31b}, the following transpire:
\begin{itemize}
\item
Whether $ \widehat{\hh}_{k-1,k}=\00$ or $ \widehat{\hh}_{k-1,k}\neq\00$,
$$\sss_{n,k-1}-\sss=O(\gg_k(n))\quad \text{as $n\to\infty$},$$
\item
 Whether $ \widehat{\hh}_{k+j,k+\mu}=\00$ or $ \widehat{\hh}_{k+j,k+\mu}\neq\00$,  $0\leq j \leq\mu-1$, $$\sss_{n,k+j}-\sss=O(\gg_{k+\mu}(n))\quad
\text{as $n\to\infty$}, \quad 0\leq j\leq \mu-1.$$
\item
If $ \widehat{\hh}_{k-1,k}\neq\00$, then
$\sss_{n,k}$ converges faster (or diverges slower) than $\sss_{n,k-1}$,  that is,
$$ \lim_{n\to\infty}\frac{\|\sss_{n,k}-\sss\|}{\|\sss_{n,k-1}-\sss\|}=0.$$
\item
If
$  \widehat{\hh}_{k+j,k+\mu}\neq\00,\quad 0\leq j\leq \mu-1,$
then
$$ \|\sss_{n,k+j}-\sss\|\sim M_{k+j}\|\gg_{k+\mu}(n)\|\quad \text{as $n\to\infty$},\quad j=0,1,\ldots,\mu-1,$$ for some positive constants $M_{k+j}$.  That is,
$\sss_{n,k},\sss_{n,k+1},\ldots,\sss_{n,k+\mu-1}$ converge (or diverge) at precisely the same rate.
\end{itemize}

\item  We have assumed that $\mathbb{X}$ is an inner product space only for the sake of simplicity. We can assume $\mathbb{X}$ to be a normed Banach space in general. In this case, we replace $\braket{\yy, \uu}$ by $Q(\uu)$, where $Q$ is a bounded linear functional on $\mathbb{X}$. With this, the analysis of this section goes through in a straightforward manner.
\end{enumerate}


\end{document}